\documentclass[12pt]{article}
\usepackage{amsmath,amssymb,amsfonts,amsthm,amscd,mathtools} 
\usepackage{mathrsfs} 
\usepackage{color}
\usepackage{enumerate}
\usepackage{authblk}
\usepackage{a4}
\usepackage{comment}
\providecommand{\keywords}[1]{Keywords: #1}
\def\S{\mathcal{S}}

\def\cL{\mathcal{L}}

\newcommand{\Pc}{{\mathcal{P}}}

\newcommand{\dd}{{\rm d}}
\newcommand{\WW}{\mathbb{W}}

\newcommand{\TT}{\mathbb{T}}

\newcommand{\iso}{\textrm{iso}}

\def\W{\mathcal{W}}
\def\div{\mathrm{div}}
\newcommand{\lcN}{\;\!\cal N}
\newcommand{\cN}{{\cal N}}
\newtheorem{theorem}{Theorem}[section]
\newtheorem{proposition}[theorem]{Proposition}

\newtheorem{lemma}[theorem]{Lemma}
\newtheorem{example}[theorem]{Example}

\theoremstyle{definition}

\begin{document}

\title{On decompositions of non-reversible processes}
\author[1]{M. H. Duong\footnote{h.duong@bham.ac.uk}}
\affil[1]{School of Mathematics, University of Birmingham, UK} 

\author[2]{J. Zimmer\footnote{jz@ma.tum.de}}
\affil[2]{Fakultät für Mathematik, Technische Universität München, Germany}

\maketitle

\begin{abstract}
  Markov chains are studied in a formulation involving forces and fluxes. First, the iso-dissipation force
  recently introduced in the physics literature is investigated; we show that its non-uniqueness is linked to
  different notions of duality giving rise to dual forces. We then study Hamiltonians associated to
  variational formulations of Markov processes, and develop different decompositions for them.
\end{abstract}

\keywords{Markov process, non-reversible process, entropy production, force-flux formulation}
\section{Introduction}
Many processes in nature are directional in time, with diffusion being a simple but prominent example. In this article, we call these processes non-reversible, for reasons explained below. 
Further examples of non-reversible processes are drift-diffusion processes, 
\begin{equation}
  \label{eq:intr1}
  \partial_t\rho=-\mathrm{div}(f\rho)+\Delta\rho,
\end{equation}
where $\rho$ is a density and $f$ is a field. Here the nature of the field $f$ matters: if $f = \nabla V$ for some potential $V$, then the evolution can be understood in terms of an underlying free energy. If $f$ is not a gradient vector field, i.e., no such $V$ exists, then significant qualitative changes occur. These changes can have practical applications. For example, a system governed by  a free energy  converges under suitable conditions to equilibrium. Non-gradient terms change the rate of convergence, and can be used to accelerate the convergence (see, e.g.,~\cite{Kaiser2017} for an analysis on the level of Markov chains). This is sometimes visualised with milk being dropped in a cup of coffee: The milk will diffuse, and the mixture  which will eventually reach equilibrium (the homogeneous mixture of milk and coffee) without interference, but will reach equilibrium much faster if stirred by a spoon. 

This analogy can only lead so far. A more careful analysis reveals that processes can often be decomposed  in different components, which often satisfy suitable orthogonality relations. The understanding of such splits help to understand acceleration of convergence to equilibrium. For example, one can think of convergence to equilibrium as decreasing the free energy, thus undergoing a suitable gradient descent. If this takes place in a shallow part of the energy landscape, then the convergence will be slow. If there is an additional drift ``orthogonal'' to the gradient descent, then this can lead to steeper regions being reached, where convergence is faster. 

The situation gets even more complicated in processes out of equilibrium. One can think of a one-dimensional bar with both ends held at different systems. Then a cost is required to maintain the steady state, unlike the maintenance of the steady state in equilibrium. This cost is sometimes called housekeeping heat. The analysis of such phenomena is a very active field in stochastic thermodynamics. An example is recent work by Dechant, Sasa and Ito~\cite{Dechant2022a, Dechant2022b} on the geometric decomposition of entropy production in housekeeping, excess and coupling.

There are various levels on which non-reversible processes can be looked at -- on the continuum scale in form of a
partial differential equation as in~\eqref{eq:intr1}, or on the scale of underlying Markov processes. We will
largely focus on Markov processes. One reason is that their structure is remarkable in the following sense:
The evolution can be described in terms of forces $F$ and fluxes $j$, and the relation between them is always
the same (equation~\eqref{eq:j-F} below); the rates, which differentiate different Markov processes, enter
through the mobility (equation~\eqref{eq:a} below). The price to pay is that the force-flux relations on the
level of Markov chains are necessarily nonlinear. Section~\ref{sec:isosurface} summarises these classic results. The focus on Markov chains also explains the slightly
unusual terminology ``non-reversible'': This is to avoid the term ``irreversible'', which has a different
meaning for Markov processes.

\subsection{Outline of the paper}
The paper combines two thrusts of investigation.

In Section~\ref{sec:isosurface}, we first summarise some key notions for the description of  Markov processes in terms of forces and fluxes. We then explain the non-uniqueness of the so-called iso-dissipation force recently introduced in the physics literature in a mathematical way. 

In Section~\ref{sec:HJ-for-MC}, we use that the most likely evolution of a Markov chain can (in suitable
situations) be described by a variational principle, a so-called large deviation principle. There, the pathwise
evolution appears as minimiser of a functional. The functional is called rate functional and the integrand is
here denoted as Lagrangian. We consider its Legendre dual, thus a Hamiltonian. The central results of this section are various decompositions of the Hamiltonian. We also discuss an application of these splittings to diffusive
processes in terms of the Fokker-Planck equation.

\section{Orthogonality of forces and decomposition of the entropy production}
\label{sec:isosurface}

In this paper, we consider an ergodic Markov process $(X_t)_{t\geq 0}$ with generator $\mathcal{L}$ and a
unique invariant measure $\pi$. The time evolution of the probability density $\rho_t$ of $X_t$ can be written
in terms of the master equation (also called Fokker-Planck or forward Kolmogorov equation),
\begin{subequations}
\label{eq: general PDE}
\begin{align}
    \dot{\rho}_t&=\mathcal{L}'(\rho_t)\label{eq: dual-generator-form}
    \\&=-\div[j(\rho_t)]\label{eq: currentform}
    \\&=-\div[a(\rho_t)\phi(F(\rho_t))],\label{eq: force form}
\end{align}
\end{subequations}
where $\mathcal{L}'$ denotes the adjoint (with respect to the $L^2$ inner product) generator of $\mathcal{L}$,
$j$ is the flux (current), $a$ is the mobility and $F$ is the force. 

In~\eqref{eq: currentform} and~\eqref{eq: force form} we use a, possibly nonlinear, relation between forces
and fluxes
\begin{equation*}
  j(\rho)=a(\rho)\phi(F(\rho)),
\end{equation*}
for some function $\phi$. For instance, for the spatially continuous setting of diffusion processes, $j$ is
linearly dependent on $F$,
\begin{equation*}
  j(\rho)=a(\rho)F(\rho),
\end{equation*}
while for spatially discrete Markov chains, the relation between $j$ and $F$ is non-linear, and specifically
of the form
\begin{equation}
  \label{eq:j-F}
  j_{xy}(\rho)=a_{xy}(\rho)\sinh\Big(\frac{1}{2}F_{xy}(\rho)\Big),
\end{equation}
where
\begin{equation}
   \label{eq:a}
  a_{xy}(\rho)=2\sqrt{\rho(x)r_{xy}\rho(y)r_{yx}},\quad\text{and}\quad F_{xy}(\rho)=\log\frac{\rho(x)r_{xy}}{\rho(y)r_{yx}},
\end{equation}
with $r_{xy}$ being the transition rate for jumps from state $x$ to state $y$. This result goes back to
Schnakenberg~\cite{Schnakenberg1976a}, see also~\cite{Kaiser2018a}.

The formulations~\eqref{eq: dual-generator-form},~\eqref{eq: currentform} and~\eqref{eq: force form} give rise to different ways of decomposing a non-reversible dynamics into symmetric and anti-symmetric parts studied in recent years:
\begin{enumerate}[(i)]
\item A decomposition of the generator~\cite{GrmelaOttinger1997,DuongOttobre2021}: $\mathcal{L}=\mathcal{L}_S+\mathcal{L}_A$ (and thus of the dual operator
  $\mathcal{L}'$),
\item A decomposition of the fluxes~\cite{Kaiser2018a}: $j=j_S+j_A$,
\item A decomposition of the forces~\cite{Kaiser2018a,RengerZimmer2021,PattersonRengerSharma2022}: $F=F_S+F_A$.
\end{enumerate}
In this paper, we complement these decompositions by developing different splittings for Hamiltonians, that is, duals of Lagrangians appearing in a variational formulation of the Markov process. We now sketch this setting. 

Under fairly general conditions, see for example~\cite{Kaiser2018a}, the most probable state of a system is
described by a so-called large deviation principle (LDP), where the most probable state appears as a minimiser
of a rate functional. We consider this here in the situation where a Markov chain has a unique invariant
measure, and the stochastic process associated with the chain is carried out $\cN$ times (which van be interpreted as the
experiment described by the process being run $\cN$ times). Then the evolution of the system in terms of density $\rho_t$ an current $j_t$,
with subscript $t$ denoting time-dependence as common in stochastic processes, satisfies a \emph{large deviation
principle} (see, for example,~\cite{Kaiser2018a})
\begin{equation}
  \label{equ:pathwise-general}
  \mathrm{Prob}\left( (\hat\rho_t^{\lcN},\hat\jmath_t^{\lcN})_{t\in[0,T]} \approx (\rho_t, j_t)_{t\in[0,T]}\right)
  \asymp  \exp\left\{-\mathcal N I_{[0,T]}\left( (\rho_t, j_t)_{t\in[0,T]}\right)\right\},
\end{equation}
with \emph{rate function} of the form
\begin{equation}
  \label{eqn:mc_rate_functional}
  I_{[0,T]}\bigl((\rho_t, j_t)_{t\in[0,T]}\bigr)=  I_0(\rho_0) + \frac12\int_0^T \Phi(\rho_t,j_t, F(\rho_t))  \dd t ,
\end{equation}
where
\begin{equation}
  \label{eqn:Phi_function}
  \Phi(\rho,j,f):= \Psi(\rho,j) - \langle j, f\rangle + \Psi^*(\rho, f),
\end{equation} 
in which $\Psi$ is a convex functional, $\Psi^*$ is its Legendre dual, and $\langle j, f\rangle$ is a dual pairing
between a current $j$ and a force $f$. Here $\Psi$ and hence $\Psi^*$ depend on the process, in particular
on the rates. While $\Phi$ is a function of density $\rho$, flux $j$ and force $f$, the force in the physical
system is given as a function of $\rho$, which we write as uppercase symbol $F(\rho_t)$. For ease of notation, we will only write the second argument in the functionals $\Psi$ and $\Psi^*$ if no confusion can arise.

The dual pairing in the rate functional has a special meaning. Given a Markov process with flux $j$ and a
force $F$, the \emph{entropy production rate} associated with the process is given by
\begin{equation}
  e:=2\langle j,F \rangle,    
\end{equation}
where $\langle j,F \rangle $ is a dual pairing between the flux and the force as
in~\eqref{eqn:mc_rate_functional}.

It is of significant current interest in physics to understand the roles of entropy and entropy production, in
particular in processes out of equilibrium. For example, recently a connection between the response of a
physical observable and the relative entropy has been established~\cite{Dechant2020a}. A natural way to
understand entropy (production) is to split it in components. In~\cite{Kobayashi2022}, the authors introduce a
decomposition of the force as $F=F_S+F_A$ where
\begin{equation}
  \label{eq: decomposition of f}    
  F_S=\frac{F+F_{\iso}}{2},\quad\text{and}\quad F_A=\frac{F-F_{\iso}}{2}.
\end{equation}
In the formula above, $F_{\iso}$, which is called an \emph{iso-dissipation force}, is defined such that the identity 
\begin{equation}
  \label{eq: Fiso}
  \Psi^*(F)=\Psi^*(F_{\iso})   
\end{equation}
holds. Obviously this does not define $F_\iso$ uniquely, except for degenerate situations.

The decomposition of the force~\eqref{eq: decomposition of f} gives rise to a nonnegative decomposition of the
entropy production~\cite[Section II C]{Kobayashi2022}
\begin{align}
  e=2\langle j,f\rangle&=2\langle j, F_S\rangle+ 2\langle j, F_A\rangle    \notag
  \\&=D[j\|-j_{\iso}]+D[j\|j_{\iso}],\label{ed: decomposition of entropy production}
\end{align}
where $j_{\iso}$ is the Legendre transform of $F_{\iso}$ and given two fluxes $j_1, j_2$, $D[j_1\|j_2]$ is the \emph{Bregman divergence} between them, 
\begin{equation*}
  D[j_1\| j_2]:=\Psi(j_1)-\Psi(j_2)-\langle j_1-j_2,\partial_{j_1}\Psi(j_2)\rangle\geq 0.
\end{equation*}
In addition, according to~\cite{Kobayashi2022} there are infinitely many decompositions of the
force~\eqref{eq: decomposition of f} (and thus of the entropy production~\eqref{ed: decomposition of entropy
  production}), originating from different choices of iso-dissipation forces according to~\eqref{eq: Fiso}.

In the rest of this section, combining results from~\cite{Jack2017a} and an adaption of arguments
in~\cite{Kaiser2018a}, we provide a mathematical interpretation of iso-dissipation forces. The key idea is to
link them to the dual force (associated to a dual process of the original one).

As in~\cite{Jack2017a}, we define an adjoint process for which the probability of a path $(\rho_t,j_t)_{t\in[0,T]}$ is equal to the
probability of the time-reversed path $(\rho^*_t,j^*_t)_{t\in[0,T]}=(\rho_{T-t},-j_{T-t})_{t\in[0,T]}$ in the
original process. We assume that the adjoint process also satisfies an LDP of the
form~\eqref{equ:pathwise-general}, with rate function $I^*_{[0,T]}$. Hence we must have
\begin{equation} 
  \label{equ:assume-adjoint}
  I^*_{[0,T]}\bigl((\rho_t, j_t)_{t\in[0,T]}\bigr) = I_{[0,T]}\bigl((\rho^*_t, j^*_t)_{t\in[0,T]}\bigr) .
\end{equation}
Moreover, we assume that $I^*_{[0,T]}$ may be obtained from $I$ by replacing the force $F(\rho)$ with some
adjoint force $F^*(\rho)$ (this is discussed in~\cite{Kaiser2018a}. That is,
\begin{equation} 
  \label{equ:assume-Fstar}
  I^*_{[0,T]}\bigl((\rho_t, j_t)_{t\in[0,T]}\bigr)=  I_0(\rho_0) + \frac12\int_0^T \Phi(\rho_t,j_t, F^*(\rho_t))  \dd t.
\end{equation}
Here, $I_0$ is the rate function associated with fluctuations of the density $\rho$, for a system in its
steady state.  That is, within the steady state,
$\mathrm{Prob}(\hat\rho^{\;\!\cN}\approx \rho) \asymp \exp(-\cN I_0(\rho))$.

Then by~\cite[Proposition 3]{Kaiser2018a} we have
\begin{equation}
  \label{equ:FsFaOrth}
  \Psi^*\bigl(\rho,F(\rho)\bigr) = \Psi^*\bigl(\rho, F^*(\rho)\bigr) .
\end{equation}%
It follows from~\eqref{equ:FsFaOrth} and~\eqref{eq: Fiso} that one can identify an iso-dissipation force with
a dual force $F_{\iso}=F^*$.

We now show that mathematically there are infinitely many representations for the dual force $F^*$. Therefore,
there are indeed infinitely choices for an iso-dissipation force.


We define a time-reversal operation $\TT_0$ which reverses time but does not change any coordinates or
momenta. That is, for paths $X$ on the time interval $[0,\tau]$, we take $(\TT_0X)_t=(X)_{\tau-t}$. Now define
an adjoint dynamics~\cite{Bertini2015} for which the path measure is $\Pc^*$, with
\begin{equation}
  \dd\Pc^*(X)=\dd\Pc\left(\TT_0 X\right) .
  \label{equ:P*}
\end{equation}
Let $\WW^*$ be the corresponding operator for the adjoint process. Then $\WW^*$ satisfies
\begin{equation}
  \label{eq1}
  (\WW)_{y,x}\pi(x)=(\WW^*)_{x,y}\pi(y).
\end{equation}
Let $\mu$ be a measure which is absolutely continuous with respect to the invariant measure $\pi$. We denote
by $h=\frac{\dd\mu}{\dd\pi}$ the corresponding Radon-Nikodym derivative. The following argument is adapted
from~\cite{Jack2017a}. Let $\WW^+_\mu$ be the adjoint of $\WW$ with respect to $\mu^{-1}$, that is
\begin{equation*} 
  \int f(y)(\WW g)(y)\mu^{-1}(y)\dd y=\int g(x)(\WW^+_\mu f)(x)\mu^{-1}(x)\dd x.
\end{equation*}
The above equality can be rewritten as
\begin{equation*}
  \int\int f(y)(\WW)_{y,x}g(x)\mu^{-1}(y)\dd x\dd y=\int\int g(x)(\WW^+_\mu)_{x,y} f(y)\mu^{-1}(x)\dd x\dd y,
\end{equation*}
which implies that
\begin{equation}
  \label{eq2}
  (\WW)_{y,x}\mu^{-1}(y)=(\WW^+_\mu)_{x,y}\mu^{-1}(x).
\end{equation}
From~\eqref{eq1} and~\eqref{eq2} we deduce that
\begin{equation*}
  (\WW^*)_{x,y}=(\WW)_{y,x}\frac{\pi(x)}{\pi(y)}=\frac{\pi(x)}{\mu(x)}(\WW^+_\mu)_{x,y}\frac{\mu(y)}{\pi(y)}.
\end{equation*}
Thus
\begin{equation}
  \label{eq: representation of dual operator}    
  \WW^*=\WW^*_\mu=h^{-1}\circ \WW^+_\mu \circ h.
\end{equation}
Hence for each $\mu$,~\eqref{eq: representation of dual operator} provides a representation of $\WW^*$, which
in turn gives rise to a dual force $F^*=F^*_\mu$. As a consequence, using the identification between an
iso-dissipation force and a dual force discussed after~\eqref{equ:FsFaOrth}, it follows that there are
infinitely many iso-dissipation forces (and thus infinitely many different ways of decomposing the entropy
production).

\begin{example}
  Consider the following non-reversible diffusion process
  \begin{equation}
    \label{eq: nonreversible SDE}    
    \dd X_t=f(X_t)\,\dd t+\sqrt{2}\sigma\,\dd W(t).
  \end{equation}
  The associated Fokker-Planck equation is
  \begin{equation*}
    \partial_t\rho=\WW\rho=-\mathrm{div}(f\rho)+D \Delta\rho=-\mathrm{div}(\rho F(\rho)),
  \end{equation*}
  where $D=\sigma^2$ and
  \begin{equation*}
    F(\rho)=f-D \log\nabla\rho.
  \end{equation*}
  Suppose the invariant measure is of the form $\dd\pi(x)\propto e^{-U(x)}\dd x$ (this is a common assumption
  in physics, with the difficulty that $U$ is in general not known).  Taking $\mu(dx)=\dd x$, the
  corresponding adjoint process is
  \begin{equation*}
    \partial_t \rho = \WW^* \rho = -\mathrm{div}(f^*\rho)+D \Delta\rho=-\mathrm{div}(\rho F^*(\rho)),
  \end{equation*}
  where
  \begin{equation*}
    f^*(\rho)=-(2D \nabla U+f), \quad\text{and}\quad F^*(\rho)=f^*(\rho)-D\log\nabla\rho.
  \end{equation*}
  Thus a decomposition of the force is
  \begin{align*}
    F_S(\rho)=\frac{F(\rho)+F^*(\rho)}{2}& =-D\nabla U-D\log\nabla\rho \intertext{and}
                                           F_A(\rho)&
                                                      =\frac{F(\rho)-F^*(\rho)}{2}=f+D\nabla U.
  \end{align*}
  While $F_S$ and $F_A$ are uniquely defined, there are infinitely many choices of $F_\iso$ giving rise to this decomposition, via~\eqref{eq: decomposition of f}. 
\end{example}

\section{Hamilton-Jacobi formulations for Markov chains}
\label{sec:HJ-for-MC}

In this section we review the Hamiltonian viewpoint of the picture described above, and introduce splittings on the Hamiltonian level. As discussed in~\cite[Eq. (44)]{Kaiser2018a}, the Hamiltonian associated with Markov chains is
\begin{equation}
   \mathscr{H}(\rho,\xi) = \frac12\left[ \Psi^*(\rho,F(\rho) + 2\xi) -   \Psi^*(\rho,F(\rho)) \right].
  \label{equ:ham-markov}
\end{equation}
Let $ \mathscr{L}$ be the Legendre dual of $ \mathscr{H}$, that is
\begin{equation*}
 \mathscr{L}(\rho,j)=\sup_{\xi}\{\langle j,\xi\rangle - \mathscr{H}(\rho, \xi)\}.
\end{equation*}
Given the Hamiltonian $\mathscr{H}$ and the force $F$, we can also find the functional $\Psi^*$ from \eqref{equ:ham-markov} by
\begin{equation}
\Psi^*(\rho, \xi)=2\Big[\mathscr{H}(\rho,\frac{1}{2}(\xi-F(\rho)))-\mathscr{H}(\rho,-\frac{1}{2}F(\rho))\Big].    
\end{equation}

\begin{lemma}
The Hamiltonian being of the form~\eqref{equ:ham-markov} is equivalent to the Lagrangian being given by 
\begin{equation}
\label{eq:lang-markov}
     \mathscr{L}(\rho,j)=\frac{1}{2}\Big[\Psi(\rho, j)-2\langle j,F(\rho)\rangle+\Psi^*(\rho, F(\rho))\Big].
\end{equation}
\end{lemma}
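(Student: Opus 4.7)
The assertion is a statement of Legendre biduality specialised to the particular shifted form of $\mathscr H$, so I plan to prove it by a direct calculation in both directions (Hamiltonian $\Rightarrow$ Lagrangian and vice versa) under the implicit assumption that $\Psi(\rho,\cdot)$ is proper, convex and lower semicontinuous in its second argument, so that $\Psi^{**}=\Psi$. These assumptions are satisfied in the Markov-chain setting described earlier, where $\Psi$ is built from $\cosh$-type convex integrands with superlinear growth.

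For the forward direction I would start from the definition
\begin{equation*}
\mathscr L(\rho,j)=\sup_{\xi}\bigl\{\langle j,\xi\rangle-\mathscr H(\rho,\xi)\bigr\}
=\sup_{\xi}\Bigl\{\langle j,\xi\rangle-\tfrac12\Psi^*(\rho,F(\rho)+2\xi)\Bigr\}+\tfrac12\Psi^*(\rho,F(\rho)),
\end{equation*}
and then perform the affine change of variable $\eta:=F(\rho)+2\xi$, i.e.\ $\xi=\tfrac12(\eta-F(\rho))$. Under this substitution the linear pairing splits as $\langle j,\xi\rangle=\tfrac12\langle j,\eta\rangle-\tfrac12\langle j,F(\rho)\rangle$, and the sup over $\xi$ becomes a sup over $\eta$ of $\tfrac12\langle j,\eta\rangle-\tfrac12\Psi^*(\rho,\eta)$. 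Pulling out the factor $\tfrac12$ and invoking Legendre biduality, $\sup_{\eta}\{\langle j,\eta\rangle-\Psi^*(\rho,\eta)\}=\Psi(\rho,j)$, and collecting the remaining terms produces exactly the right-hand side of~\eqref{eq:lang-markov}.

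For the converse direction I would carry out the symmetric computation: starting from the Lagrangian as given in~\eqref{eq:lang-markov}, compute $\mathscr H(\rho,\xi)=\sup_{j}\{\langle j,\xi\rangle-\mathscr L(\rho,j)\}$, absorb the linear term $-\langle j,F(\rho)\rangle$ into the pairing by writing $\langle j,\xi\rangle-\langle j,F(\rho)\rangle=\langle j,\xi-F(\rho)\rangle$ (more precisely, after the factor $\tfrac12$ is pulled out, by setting $u=2\xi+F(\rho)$ after recognising $\tfrac12\Psi$ as a half-scaled convex function), and then use $\tfrac12\Psi^{**}=\tfrac12\Psi^*$ in the shifted variable to recover the form~\eqref{equ:ham-markov}.

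The computation itself is routine; the only genuine step of substance is the appeal to $\Psi^{**}=\Psi$ together with care about the scaling of the Legendre transform (a factor $\tfrac12$ in front of $\Psi^*$ does not commute with the Legendre transform, and this is what fixes the coefficients in~\eqref{eq:lang-markov}). The main potential obstacle, therefore, is to state cleanly the regularity hypothesis on $\Psi$ that makes biduality applicable; in the Markov-chain setting this is standard, and the change-of-variable steps above go through verbatim.
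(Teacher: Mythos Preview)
Your proposal is correct and follows essentially the same route as the paper: both compute $\mathscr L$ from $\mathscr H$ by pulling out the constant term $\tfrac12\Psi^*(\rho,F(\rho))$, performing the affine substitution $\eta=F(\rho)+2\xi$ (the paper splits this into two steps, first rescaling $2\xi\mapsto\xi$ and then shifting), and recognising the remaining supremum as $\Psi(\rho,j)$ via biduality; the converse is likewise dispatched by the symmetric calculation. Your explicit mention of the convexity/lower-semicontinuity hypothesis needed for $\Psi^{**}=\Psi$ is a useful addition that the paper leaves implicit.
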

We note that~\eqref{eq:lang-markov} is precisely the form of the integrand of a rate functional we have
encountered in~\eqref{eqn:Phi_function}, with the middle term being the entropy production rate. 
\begin{proof}
This proof is adapted from \cite{Mielke2014} to be consistent with our definition \eqref{equ:ham-markov} (in \cite{Mielke2014} the authors use a slightly different definition where there are no factors $1/2$ and $2$ on the right-hand side of \eqref{equ:ham-markov}). Suppose $\mathscr{H}$ is given as in \eqref{equ:ham-markov}. Then we get
\begin{align*}
 \mathscr{L}(\rho,j)&=\sup_{\xi}\{\langle j,\xi \rangle - \mathscr{H}(\rho, \xi)\}
\\&=\sup_{\xi}\{\langle j,\xi\rangle -\frac{1}{2}\Psi^*(\rho, F(\rho)+2\xi)+\frac{1}{2}\Psi^*(\rho, F(\rho))\}
\\&=\sup_{\xi}\{\langle j,\xi\rangle -\frac{1}{2}\Psi^*(\rho, F(\rho)+2\xi)\}+\frac{1}{2}\Psi^*(\rho, F(\rho))
\\&=\frac{1}{2}\sup_{\xi}\{\langle j,2\xi\rangle -\Psi^*(\rho, F(\rho)+2\xi)\}+\frac{1}{2}\Psi^*(\rho, F(\rho))
\\&=\frac{1}{2}\sup_{\xi}\{\langle j,\xi\rangle -\Psi^*(\rho, F(\rho)+\xi)\}+\frac{1}{2}\Psi^*(\rho, F(\rho))
\\&=\frac{1}{2}\sup_{\xi}\{\langle j,\xi-F(\rho)\rangle -\Psi^*(\rho, \xi)\}+\frac{1}{2}\Psi^*(\rho, F(\rho))
\\&=\frac{1}{2}\sup_{\xi}\{\langle j,\xi\rangle -\Psi^*(\rho, \xi)\}-\langle j,F(\rho)\rangle+\frac{1}{2}\Psi^*(\rho, F(\rho))
\\&=\frac{1}{2}\Psi(\rho, j)-\langle j,F(\rho)\rangle+\frac{1}{2}\Psi^*(\rho, F(\rho)).
\end{align*}
Similarly suppose $ \mathscr{L}$ is given in \eqref{eq:lang-markov} then one obtains \eqref{equ:ham-markov}.  
\end{proof} 


\subsection{Splittings of Hamiltonians}
\label{sec:splitt-hamilt}

The question addressed in this section is: Given an evolution of a Markov process governed by a Lagrangian of the form~\eqref{eq:lang-markov}, can this variational formulation be split in symmetric and asymmetric parts? For example,  assume we consider a process with a given rate functional~\eqref{eq:lang-markov} and construct a second process with the same minimiser of the rate functional, where the rate functional is equal or higher than the one of the first process. This can be interpreted as acceleration of convergence to equilibrium: The equilibrium (minimiser) has not changed, but the steeper nature of the second functional can lead to faster convergence. For example, it is illuminating if a contribution due to an asymmetric term can be shown to increase the Lagrangian. If the functionals were quadratic, polarisation identities could be easily used. The situation with nonquadratic functionals $\Psi^\star$ is more complex (also on the level of the associated evolution equation, which is then nonlinear). We develop here splittings in the dual (Hamiltonian) picture, where information of this kind can be read off. While the splittings appear technical at first sight, we illustrate them with an example in Subsection~\ref{sec:appl-diff-proc}. 


We consider a general decomposition of the Hamiltonian $\mathscr{H}$ as a sum of two Hamiltonians and study its implications. The computations are formal, as we assume sufficient regularity and convexity conditions to ensure the existence of derivatives of the relevant functionals and to guarantee the existence of maximisers (minimisers) in the relevant suprema (infima).  We first characterise the minimiser of $\mathscr{H}$. The following lemma is elementary and well-known. We include the proof for completeness.

\begin{lemma}
  \label{lem: minH}
  Let $\mathscr{L},\mathscr{H}$ be Legendre duals. Then
  $ \min\limits_{\xi}\mathscr{H}(\rho,\xi)=-\mathscr{L}(\rho,0)$ is achieved at
  $\xi^*=\partial_j\mathscr{L}(\rho,0)$.
\end{lemma}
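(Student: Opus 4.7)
The plan is to extract everything directly from the definition of the Legendre transform by setting the dual variable $j$ equal to zero, and then identify the argmin using the standard involutive property of the Legendre transform under convexity.

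First I would write out
\[
\mathscr{L}(\rho,0) = \sup_\xi \{\langle 0,\xi\rangle - \mathscr{H}(\rho,\xi)\} = -\inf_\xi \mathscr{H}(\rho,\xi),
\]
which immediately gives $\min_\xi \mathscr{H}(\rho,\xi) = -\mathscr{L}(\rho,0)$. This handles the value part of the statement and uses only the defining formula for $\mathscr{L}$ from Legendre duality.

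Next I would locate the minimiser. Under the assumed smoothness and convexity of $\mathscr{H}(\rho,\cdot)$, the minimiser $\xi^*$ is characterised by the first-order condition $\partial_\xi \mathscr{H}(\rho,\xi^*)=0$. By the involutivity of the Legendre transform for convex, smooth Hamiltonians, the relation $j=\partial_\xi \mathscr{H}(\rho,\xi)$ is equivalent to $\xi = \partial_j \mathscr{L}(\rho,j)$. Applying this to $j=0$ yields $\xi^* = \partial_j \mathscr{L}(\rho,0)$. Alternatively, one can argue directly: differentiating the sup-defining relation $\mathscr{L}(\rho,j) = \langle j,\xi(j)\rangle - \mathscr{H}(\rho,\xi(j))$ in $j$ and using the envelope theorem gives $\partial_j \mathscr{L}(\rho,j) = \xi(j)$, so that at $j=0$ the maximiser in the Legendre transform is precisely $\partial_j \mathscr{L}(\rho,0)$, and this maximiser coincides with the minimiser of $\mathscr{H}(\rho,\cdot)$ by the computation above.

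There is no real obstacle here since the proof is essentially two applications of the definition; the only subtlety is the regularity assumption under which the derivative $\partial_j \mathscr{L}(\rho,0)$ exists and the involutive identity holds. This is covered by the blanket convexity and smoothness hypothesis stated just before the lemma, so no extra work is needed.
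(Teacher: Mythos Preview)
Your argument is correct. The paper's proof takes a mildly more roundabout path: it expresses $\mathscr{H}$ via the inverse Legendre representation $\mathscr{H}(\rho,\xi)=\langle\xi,s_\xi\rangle-\mathscr{L}(\rho,s_\xi)$ with $\xi=\partial_j\mathscr{L}(\rho,s_\xi)$, then differentiates in $\xi$ using the chain rule to deduce $s_{\xi_{\mathrm{opt}}}=0$, from which both the value and the location of the minimiser follow. Your approach instead reads off the value immediately by plugging $j=0$ into the defining supremum for $\mathscr{L}$, and then identifies the argmin via the standard involutive identity $j=\partial_\xi\mathscr{H}(\rho,\xi)\Leftrightarrow\xi=\partial_j\mathscr{L}(\rho,j)$ (or equivalently the envelope theorem). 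The two arguments are logically equivalent under the stated convexity and smoothness assumptions; yours is a touch more economical because it avoids the explicit chain-rule computation, while the paper's version makes the implicit-function structure of the Legendre pair more visible.
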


\begin{proof}
By definition of $\mathscr{H}$, we have
\begin{equation}
  \label{eq: H interms of s_xi}
  \mathscr{H}(\rho,\xi)=\langle\xi,s_\xi\rangle-\mathscr{L}(\rho,s_\xi),
\end{equation}
where $s_\xi$ satisfies
\begin{equation}
  \label{eq: s_xi}
  \xi=\partial_j\mathscr{L}(\rho,s_\xi).
\end{equation}
The optimal $\xi_{\mathrm{opt}}$ is thus found through
\begin{equation}
  \label{eq: xi_opt 1}
  \xi_{\mathrm{opt}}=\partial_j\mathscr{L}(\rho,s_{\xi_{\mathrm{opt}}}),
\end{equation}
and
\begin{align*}
  \label{eq: xi_opt 1}
  0&=\partial_\xi\mathscr{H}(\rho,\xi_{\mathrm{opt}})
  \\&=s_{\xi_{\mathrm{opt}}}+\xi_{\mathrm{opt}}\partial_{\xi_{\mathrm{opt}}}
  s_{\xi_{\mathrm{opt}}}-\partial_j\mathscr{L}(\rho,s_{\xi_{\mathrm{opt}}})
  \partial_{\xi_{\mathrm{opt}}}s_{\xi_{\mathrm{opt}}}
  \\&=s_{\xi_{\mathrm{opt}}}+(\xi_{\mathrm{opt}}-\partial_j\mathscr{L}(z,s_{\xi_{\mathrm{opt}}}))
  \partial_{\xi_{\mathrm{opt}}}s_{\xi_{\mathrm{opt}}}
  \\&=s_{\xi_{\mathrm{opt}}}.
\end{align*}
Hence $\xi_{\mathrm{opt}}=\partial_j\mathscr{L}(\rho,0)$ and
$\min\limits_{\xi}\mathscr{H}(\rho,\xi)=\mathscr{H}(\rho,\xi_{\mathrm{opt}})=-\mathscr{L}(\rho,0)$ as claimed.
\end{proof}
We now give different decompositions of Hamiltonian $\mathscr{H}$.  The first result provides a decomposition
of the Hamiltonian $\mathscr{H}=\mathscr{H}_1+\mathscr{H}_2$. The key idea is to relate the Legendre duality
of the sum of two operators with the duality of each of them, which was studied for instance
in~\cite{Attouch1996}. We will apply this general result to the cases where $\mathscr{H}_1$ and
$\mathscr{H}_2$ are constructed respectively from the reversible and irreversible parts of the underlying
process.

\begin{proposition}
  \label{prop: general decomposition of H}
  Suppose that the Hamiltonian $\mathscr{H}$ can be decomposed as a sum of two Hamiltonians $\mathscr{H}_1$
  and $\mathscr{H}_2$,
  \begin{equation}
    \label{eq: general decomposition of H}
    \mathscr{H}(\rho,\xi)=\mathscr{H}_1(\rho,\xi)+\mathscr{H}_2(\rho,\xi),
  \end{equation}
  where $\mathscr{H}_1(\rho,0)=\mathscr{H}_2(\rho,0)=0$.  Let $\mathscr{L}_1$, $\mathscr{L}_2$ and $\mathscr{L}$
  be the Legendre transformation of $\mathscr{H}_1$, $\mathscr{H}_2$ and $\mathscr{H}$. Then we have
  \begin{equation}
    \label{eq: L vs L1 and L2}
    \mathscr{L}(\rho,j)=\mathscr{L}_1(\rho,\partial_\xi \mathscr{H}_1(\rho,\xi'))
    +\mathscr{L}_2(\rho,\partial_\xi \mathscr{H}_2(\rho,\xi')),
  \end{equation}
  where each term on the right-hand side is non-negative and $\xi'$ satisfies
  \begin{equation}
    \label{eq: eq xi'}
    \partial_\xi \mathscr{H}(\rho,\xi')=j.    
  \end{equation}
  As a consequence, we have the following decomposition 
  \begin{equation}
    \label{eq: decomposition of I general}
    \mathscr{L}(\rho,0)=\mathscr{L}_1(\rho,
    \partial_{\xi}\mathscr{H}_1(\rho,\xi^*))+\mathscr{L}_2(\rho,\partial_{\xi}\mathscr{H}_2(\rho,\xi^*)),
  \end{equation}
  where $\xi^*=\partial_j \mathscr{L}(\rho,0)$, which is a solution of
  \begin{equation}
    \label{eq: eq xi*}    
    \partial_{\xi}\mathscr{H}(\rho,\xi^*)=0.
  \end{equation}
\end{proposition}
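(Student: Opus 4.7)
The plan is to exploit the definition of the Legendre transform together with the additive structure $\mathscr{H}=\mathscr{H}_1+\mathscr{H}_2$, and then read off the two pieces in terms of $\mathscr{L}_1$ and $\mathscr{L}_2$ evaluated at the partial derivatives of the respective Hamiltonians at the common maximiser.

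First I would write
\begin{equation*}
  \mathscr{L}(\rho,j)=\sup_{\xi}\{\langle j,\xi\rangle-\mathscr{H}(\rho,\xi)\}
  =\langle j,\xi'\rangle-\mathscr{H}(\rho,\xi'),
\end{equation*}
where $\xi'$ is the maximiser, characterised by the first-order condition $\partial_\xi\mathscr{H}(\rho,\xi')=j$, which is precisely~\eqref{eq: eq xi'}. Using $j=\partial_\xi\mathscr{H}_1(\rho,\xi')+\partial_\xi\mathscr{H}_2(\rho,\xi')$ and $\mathscr{H}=\mathscr{H}_1+\mathscr{H}_2$, I would split the expression above as
\begin{equation*}
  \mathscr{L}(\rho,j)=\bigl[\langle\partial_\xi\mathscr{H}_1(\rho,\xi'),\xi'\rangle-\mathscr{H}_1(\rho,\xi')\bigr]
  +\bigl[\langle\partial_\xi\mathscr{H}_2(\rho,\xi'),\xi'\rangle-\mathscr{H}_2(\rho,\xi')\bigr].
\end{equation*}

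Next I would recognise each bracket as a Legendre transform. For $i=1,2$, set $\eta_i:=\partial_\xi\mathscr{H}_i(\rho,\xi')$. By definition $\mathscr{L}_i(\rho,\eta_i)=\sup_{\zeta}\{\langle\eta_i,\zeta\rangle-\mathscr{H}_i(\rho,\zeta)\}$; the first-order condition for the supremum is $\partial_\xi\mathscr{H}_i(\rho,\zeta)=\eta_i$, which under the standing convexity/regularity assumptions has $\zeta=\xi'$ as its solution. Hence $\mathscr{L}_i(\rho,\eta_i)=\langle\eta_i,\xi'\rangle-\mathscr{H}_i(\rho,\xi')$, which is exactly the $i$-th bracket, yielding~\eqref{eq: L vs L1 and L2}. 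Non-negativity of each summand is immediate from $\mathscr{H}_i(\rho,0)=0$: taking $\zeta=0$ as a test point in the supremum defining $\mathscr{L}_i$ gives $\mathscr{L}_i(\rho,\eta_i)\geq\langle\eta_i,0\rangle-\mathscr{H}_i(\rho,0)=0$.

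For the consequence~\eqref{eq: decomposition of I general}, I would set $j=0$ in~\eqref{eq: L vs L1 and L2}. Then the distinguished $\xi'$ satisfies $\partial_\xi\mathscr{H}(\rho,\xi')=0$, which is~\eqref{eq: eq xi*}; by Lemma~\ref{lem: minH} applied to $\mathscr{L}$ and $\mathscr{H}$, this $\xi'$ coincides with $\xi^*=\partial_j\mathscr{L}(\rho,0)$. Substituting gives the stated identity.

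The main obstacle is not any deep calculation but rather keeping the formal convex-duality manipulations honest: one needs that each $\mathscr{H}_i(\rho,\cdot)$ be (strictly) convex and smooth enough for $\partial_\xi\mathscr{H}_i(\rho,\cdot)$ to be invertible onto its range, so that $\xi'$ is simultaneously the maximiser in the three Legendre problems for $\mathscr{L}$, $\mathscr{L}_1$ and $\mathscr{L}_2$. These assumptions are already absorbed into the standing regularity hypotheses announced before the proposition, so once they are invoked the argument reduces to the algebraic regrouping above.
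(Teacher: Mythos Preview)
Your argument is correct and follows essentially the same route as the paper: evaluate the supremum defining $\mathscr{L}$ at the maximiser $\xi'$, split the result using $j=\partial_\xi\mathscr{H}_1(\rho,\xi')+\partial_\xi\mathscr{H}_2(\rho,\xi')$, identify each piece as $\mathscr{L}_i$ at $\partial_\xi\mathscr{H}_i(\rho,\xi')$ via the first-order condition, and then use $\mathscr{H}_i(\rho,0)=0$ for non-negativity and Lemma~\ref{lem: minH} for the $j=0$ case. The only cosmetic difference is that the paper writes $s_1:=\partial_\xi\mathscr{H}_1(\rho,\xi')$ and $j-s_1$ where you write $\eta_1,\eta_2$.
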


Note that $ \mathscr{L}(\rho,0)$ 
depends implicitly on $\xi^*$, which solves~\eqref{eq: eq xi*}, via $\partial_\xi\mathscr{H}_1(\rho,\xi^*)$ and
$\partial_\xi\mathscr{H}_2(\rho,\xi^*)$.

\begin{proof}
We express $\mathscr{L}$ in terms of $\mathscr{L}_1$ and $\mathscr{L}_2$. We have
\begin{align}
  \label{eq: eqL1}
  \mathscr{L}(\rho,j)&=\sup_{\xi}\{\langle j,\xi\rangle- \mathscr{H}(\rho,\xi)\}\notag
  \\&=\sup_{\xi}\{\langle j,\xi\rangle- \mathscr{H}_1(\rho,\xi)-\mathscr{H}_2(\rho,\xi)\}\notag
  \\&= \langle j,\xi'\rangle- \mathscr{H}_1(\rho,\xi')-\mathscr{H}_2(\rho,\xi'),
\end{align}
where $\xi'$ solves for given $j$
\begin{equation*}
  j=\partial_{\xi}\mathscr{H}_1(\rho,\xi')+\partial_{\xi}\mathscr{H}_2(\rho,\xi')=\partial_{\xi}\mathscr{H}(\rho,\xi').
\end{equation*}
Let $s_1:=\partial_{\xi}\mathscr{H}_1(\rho,\xi')$, then $j-s_1=\partial_{\xi}\mathscr{H}_2(\rho,\xi')$. Therefore
\begin{equation}
  \label{eq: eqL2}
  \mathscr{L}_1(\rho,s_1)=\langle s_1,\xi'\rangle-\mathscr{H}_1(\rho,\xi'),\quad \mathscr{L}_2(\rho,j-s_1)=\langle j-s_1,\xi'\rangle-\mathscr{H}_2(\rho,\xi').  
\end{equation}
From~\eqref{eq: eqL1} and~\eqref{eq: eqL2}, it follows that
\begin{equation*}
  \mathscr{L}(\rho,j)=\mathscr{L}_1(\rho,s_1)+\mathscr{L}_2(\rho,j-s_1)=\mathscr{L}_1(\rho,\partial_\xi \mathscr{H}_1(\rho,\xi'))+\mathscr{L}_2(\rho,\partial_\xi \mathscr{H}_2(\rho,\xi')),
\end{equation*}
which is~\eqref{eq: L vs L1 and L2}. In addition, since $\mathscr{H}_1(\rho,0)=\mathscr{H}_2(\rho,0)=0$ we have
\begin{equation*}
  \mathscr{L}_1(\rho,j)=\sup\{\langle j,\xi\rangle-\mathscr{H}_1(\rho,\xi)\}\geq \langle 0,s\rangle-\mathscr{H}_1(\rho,0)=0.
\end{equation*}
Similarly $\mathscr{L}_2(\rho,j)\geq 0$. Hence each term on the right-hand side of~\eqref{eq: L vs L1 and L2}
is non-negative.

Applying~\eqref{eq: L vs L1 and L2} to $j=0$, 
we obtain~\eqref{eq: decomposition of I general}, where $\xi^*$ solves the equation
\begin{equation*}
  \partial_\xi \mathscr{H}(\rho,\xi^*)=0,
\end{equation*}
and according to Lemma~\ref{lem: minH}, we get $\xi^*=\partial_j \mathscr{L}(\rho,0)$.
\end{proof}

Next we consider a number of applications of Proposition~\ref{prop: general decomposition of H}. In particular, we provide different decompositions of  $\mathscr{L}(\rho,0)=\sup_{\xi}(-\mathscr{H}(\rho,\xi))$, which is of particular interest since it gives the Donsker-Varadhan rate functional for the empirical occupation  measures of the Markov process~\cite{DonskerVaradhan1975a}.
 The first
application is a characterisation of the (Donsker-Varadhan) rate functional of reversible processes. Here we follow the definition of reversibility in \cite{Kraaij2020}, that is, given a functional $\S\in C^1$, we say that the Hamiltonian $\mathscr{H}$ is \emph{reversible} with respect to $\S$ if $\mathscr{H}(\rho,\xi)=\mathscr{H}(\rho,d\S(\rho)-\xi)$ for all
  $(\rho,\xi)$. In particular, when $\S$ is the relative entropy, then this is equivalent to the usual detailed balance condition and time-reversibility \cite{Mielke2014}. According to \cite{Kraaij2020},  the Legendre pair of functionals $(\Psi, \Psi^*)$ associated to $\mathscr{H}$ as in \eqref{equ:ham-markov} with $F(\rho)=d\S(\rho)$ are strictly convex, continuously differentiable, symmetric (in the second argument) and satisfy $\Psi(\rho,0)=\Psi^*(\rho,0)=0$  if and only if $\mathscr{H}$ is reversible with respect to $\S$.


\begin{lemma}
\label{lem: reversible}
  Suppose that $\mathscr{H}(\rho,\xi)=\mathscr{H}_2(\rho,\xi)$, where $\mathscr{H}_2$ is reversible with respect
  to some functional $\S$. Let $\mathscr{L}_2$ be the associated Lagrangian and $\Psi^*_2$ be the dissipation
  potential associated to $\mathscr{H}_2$ and $d\S$, that is 
  \begin{equation}
  \label{eq:psi2}
    \Psi_2^*(\rho,\xi)=2\Big[\mathscr{H}_2(\rho, \frac1{2}(\xi-d\S(\rho))-\mathscr{H}_2(\rho, -\frac{1}{2}d\S(\rho))\Big].
  \end{equation}
  Then we have
  \begin{equation*}
    \mathscr{L}_2(\rho,0)=\frac{1}{2}\Psi_2^*(\rho,d\S(\rho)).    
  \end{equation*}
\end{lemma}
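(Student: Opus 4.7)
The plan is to reduce the statement to a direct computation via Lemma~\ref{lem: minH}, which gives $\mathscr{L}_2(\rho,0) = -\min_{\xi}\mathscr{H}_2(\rho,\xi)$; the task thus reduces to locating the minimiser of $\mathscr{H}_2(\rho,\cdot)$ and evaluating the Hamiltonian there. Since $\mathscr{H}=\mathscr{H}_2$ has the form~\eqref{equ:ham-markov} with $F(\rho)=d\S(\rho)$, we have the explicit representation
\begin{equation*}
\mathscr{H}_2(\rho,\xi) \;=\; \tfrac{1}{2}\bigl[\Psi_2^*(\rho, d\S(\rho)+2\xi) - \Psi_2^*(\rho, d\S(\rho))\bigr],
\end{equation*}
so minimising $\mathscr{H}_2(\rho,\cdot)$ amounts to minimising the map $\xi \mapsto \Psi_2^*(\rho, d\S(\rho)+2\xi)$.

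The key input is the characterisation from \cite{Kraaij2020} recalled just above the lemma: reversibility of $\mathscr{H}_2$ with respect to $\S$ is equivalent to $\Psi_2^*(\rho,\cdot)$ being strictly convex, continuously differentiable, symmetric, and vanishing at the origin. Strict convexity together with symmetry forces the unique minimum of $\Psi_2^*(\rho,\cdot)$ to be attained at $\eta = 0$, where its value is $0$; a quick way to see this is the Jensen-type bound $\Psi_2^*(\rho,0) \leq \tfrac{1}{2}\Psi_2^*(\rho,\eta) + \tfrac{1}{2}\Psi_2^*(\rho,-\eta) = \Psi_2^*(\rho,\eta)$. Consequently $\mathscr{H}_2(\rho,\cdot)$ is minimised at $\xi^{*} = -\tfrac{1}{2}d\S(\rho)$, and substitution in the displayed formula gives $\mathscr{H}_2(\rho,\xi^{*}) = -\tfrac{1}{2}\Psi_2^*(\rho, d\S(\rho))$. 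Negating and invoking Lemma~\ref{lem: minH} yields the claim.

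There is no genuine obstacle; the only point requiring care is bookkeeping the shift by $d\S(\rho)$ in the argument of $\Psi_2^*$, which is precisely what moves the minimiser of $\mathscr{H}_2$ from $\xi=0$ to $\xi^{*} = -\tfrac{1}{2}d\S(\rho)$ even though $\Psi_2^*(\rho,\cdot)$ itself is minimised at the origin.
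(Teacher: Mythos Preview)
Your argument is correct and follows essentially the same strategy as the paper: locate the minimiser of $\mathscr{H}_2(\rho,\cdot)$ via a symmetry argument and then read off $\mathscr{L}_2(\rho,0)$. The only cosmetic difference is that the paper differentiates the reversibility identity $\mathscr{H}_2(\rho,\xi)=\mathscr{H}_2(\rho,d\S(\rho)-\xi)$ directly to obtain the critical point $\xi^{*}=\tfrac12 d\S(\rho)$ and then quotes~\eqref{eq:lang-markov} at $j=0$, whereas you first pass to the representation $\mathscr{H}_2(\rho,\xi)=\tfrac12[\Psi_2^{*}(\rho,d\S(\rho)+2\xi)-\Psi_2^{*}(\rho,d\S(\rho))]$ and use the convexity/symmetry of $\Psi_2^{*}$ from \cite{Kraaij2020} to land at $\xi^{*}=-\tfrac12 d\S(\rho)$ before evaluating $-\mathscr{H}_2(\rho,\xi^{*})$ explicitly. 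The apparent discrepancy in the sign of $\xi^{*}$ is harmless here (it reflects a sign convention in how $F$ relates to $d\S$, and the symmetry of $\Psi_2^{*}$ makes the final value insensitive to it), so the two proofs agree in substance.
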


\begin{proof}
Since $\mathscr{H}=\mathscr{H}_2$ is reversible with respect to $\S$, we have
\begin{equation*}
  \mathscr{H}(\rho, \frac{1}{2}d\S-\xi)=\mathscr{H}(\rho, \xi+\frac{1}{2}d\S) \qquad\forall (\rho,\xi).
\end{equation*}
Taking the derivative with respect to $\xi$ on both sides yields
\begin{equation*}
  \partial_\xi\mathscr{H}(\rho,\frac{1}{2}d\S-\xi)
  =-\partial_\xi\mathscr{H}(\rho, \xi+\frac{1}{2}d\S) \qquad\forall (\rho,\xi).
\end{equation*}
This implies that
\begin{equation*}
  \partial_\xi\mathscr{H}(\rho,\frac{1}{2}d\S)=0.
\end{equation*}
Thus $\xi^*=\frac{1}{2}d\S$ is a solution to~\eqref{eq: eq xi*}. Therefore,
\begin{equation*}
  \mathscr{L}_2(\rho,0)=   \frac{1}{2} \Psi_2^*(\rho,d\S(\rho)),
\end{equation*}
where the last equality is~\eqref{eq:lang-markov} with $j=0$.
\end{proof}

Next, we consider a special case where $\mathscr{H}_2$ is independent of the second argument.
\begin{lemma}
  \label{lem: H2 independent of xi}
  Suppose that 
  \begin{equation}
    \label{eq: H2 independent of xi}
    \mathscr{H}(\rho,\xi)=\mathscr{H}_1(\rho,\xi)+\mathscr{H}_2(\rho).
  \end{equation}
  Then the associated Lagrangian $\mathscr{L}$ is given by 
  \begin{equation}
    \mathscr{L}(\rho,0) =\mathscr{L}_1(\rho,0)-\mathscr{H}_2(\rho).
  \end{equation}
\end{lemma}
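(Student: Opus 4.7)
The plan is to work directly from the definition of the Legendre transform, exploiting the fact that $\mathscr{H}_2(\rho)$ carries no dependence on $\xi$. First I would write
\begin{equation*}
\mathscr{L}(\rho,0)=\sup_{\xi}\{\langle 0,\xi\rangle - \mathscr{H}(\rho,\xi)\}=\sup_{\xi}\{-\mathscr{H}_1(\rho,\xi)-\mathscr{H}_2(\rho)\},
\end{equation*}
then pull the $\xi$-independent term outside the supremum to obtain
\begin{equation*}
\mathscr{L}(\rho,0)=-\mathscr{H}_2(\rho)+\sup_{\xi}\{-\mathscr{H}_1(\rho,\xi)\}=-\mathscr{H}_2(\rho)+\mathscr{L}_1(\rho,0),
\end{equation*}
which is the claimed identity.

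There is no real obstacle here; the argument is a one-line consequence of the fact that adding a constant (in the sup variable) to the objective shifts the sup by that constant without moving the maximiser. One could also try to recover the result from Proposition~\ref{prop: general decomposition of H} by solving $\partial_\xi\mathscr{H}(\rho,\xi^*)=0$ (which reduces to $\partial_\xi\mathscr{H}_1(\rho,\xi^*)=0$ since $\mathscr{H}_2$ is $\xi$-independent), but that route is in fact unavailable as stated: the proposition assumes $\mathscr{H}_1(\rho,0)=\mathscr{H}_2(\rho,0)=0$, and a $\xi$-independent $\mathscr{H}_2$ satisfying $\mathscr{H}_2(\rho,0)=0$ is identically zero. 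So the direct Legendre computation is the natural proof.

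The only technical point worth mentioning is that we implicitly use that the supremum is attained (or at least finite), which is covered by the standing regularity and convexity assumptions announced before Lemma~\ref{lem: minH}. Under those assumptions the minimiser $\xi^*$ for $\mathscr{H}$ coincides with the minimiser for $\mathscr{H}_1$ alone, consistently with Lemma~\ref{lem: minH} applied to both $\mathscr{L}$ and $\mathscr{L}_1$.
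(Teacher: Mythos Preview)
Your direct Legendre computation is correct and in fact appears verbatim as the second of the two proofs the paper gives for this lemma. The paper's \emph{primary} proof, however, does go through Proposition~\ref{prop: general decomposition of H}: it observes that for a $\xi$-independent $\mathscr{H}_2$ the Legendre dual $\mathscr{L}_2$ is supported only at $0$ with $\mathscr{L}_2(\rho,0)=-\mathscr{H}_2(\rho)$, and that $\partial_\xi\mathscr{H}_2=0$ forces $\partial_\xi\mathscr{H}_1(\rho,\xi^*)=0$, so that~\eqref{eq: decomposition of I general} yields the claim. Your objection that this route is ``unavailable as stated'' because of the hypothesis $\mathscr{H}_2(\rho,0)=0$ is sharp --- the paper applies the proposition in a setting that formally violates that assumption. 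Note, though, that in the proof of Proposition~\ref{prop: general decomposition of H} this hypothesis is used only to establish non-negativity of the two summands, not the decomposition identity~\eqref{eq: L vs L1 and L2}--\eqref{eq: decomposition of I general} itself; so the paper's first argument still goes through, just without the non-negativity conclusion (which is not asserted in this lemma anyway).
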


\begin{proof}
  Since $\mathscr{H}_2(\rho,\xi)=\mathscr{H}_2(\rho)$, $\mathscr{L}_2$ is defined only on
  $\mathrm{Dom}(\mathscr{L}_2)=\{0\}$ and $\mathscr{L}_2(\rho,0)=-\mathscr{H}_2(\rho)$. Since
  $\partial_\xi\mathscr{H}_2(\rho,\xi)=0$, it follows that
  $\partial_\xi\mathscr{H}(\rho,\xi^*)=\partial_\xi\mathscr{H}_1(\rho,\xi^*)=0$. Thus
  \begin{align*}    
    \mathscr{L}(\rho,0) &=\mathscr{L}_1(\rho,\partial_{\xi}\mathscr{H}_1(\rho,\xi^*))+\mathscr{L}_2(\rho,\partial_{\xi}\mathscr{H}_2(\rho,\xi^*))
                            =\mathscr{L}_1(\rho,0)+\mathscr{L}_2(\rho,0)
    \\&=\mathscr{L}_1(\rho,0)-\mathscr{H}_2(\rho).
  \end{align*}
  Alternatively, this can be seen directly by
  \begin{align*}    
    \mathscr{L}(\rho,0) &=\sup_{\xi}\{-\mathscr{H}(\rho,\xi)\}=\sup_{\xi}\{-\mathscr{H}_1(\rho,\xi)-\mathscr{H}_2(\rho)\}=\sup_{\xi}\{-\mathscr{H}_1(\rho,\xi)\}-\mathscr{H}_2(\rho)
    \\&=\mathscr{L}_1(\rho,0)-\mathscr{H}_2(\rho).
  \end{align*}
\end{proof}
Next, we consider a special case where $\mathscr{H}_1$ is linear with respect to the second argument and
$\mathscr{H}_2$ is reversible. In this case, we will be able to determine
$\partial_\xi \mathscr{H}_1(\rho,\xi^*)$ and $\partial_\xi\mathscr{H}_2(\rho,\xi^*)$ in~\eqref{eq: decomposition
  of I general} explicitly.
\begin{lemma}
  \label{lem: H1 linear}
  Suppose that 
  \begin{equation}
    \label{eq: H1 linear}
    \mathscr{H}(\rho,\xi)=\langle \W(\rho),\xi\rangle+\mathscr{H}_2(\rho,\xi),
  \end{equation}
  where $\mathscr{H}_2(\rho,\xi)$ is symmetric around $d\S$. Let $\mathscr{L}_2(\rho,s)$ be the Legendre dual of
  $\mathscr{H}_2(\rho,\xi)$ and $\Psi_2^*$ be the dissipation potential defined from $\mathscr{H}_2$ as
  in~\eqref{eq:psi2}.  Then we have
  \begin{equation}
    \mathscr{L}(\rho,0)
    =\mathscr{L}_2(\rho,-\W(\rho))= \frac{1}{2}\Psi_2(\rho,-\W(\rho))+\frac{1}{2}\Psi_2^*(\rho,d\S(\rho))+\langle \W(\rho),d\S(\rho)\rangle.
  \end{equation}
  In particular, if $\langle d\S(\rho),\W(\rho)\rangle=0$, then
  \begin{equation*}
    \mathscr{L}(\rho,0)
    =  \Psi_2(\rho,-\W(\rho))+\Psi_2^*(\rho,-\frac{1}{2}d\S(\rho)).
  \end{equation*}
\end{lemma}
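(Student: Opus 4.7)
The plan is to apply Proposition~\ref{prop: general decomposition of H} to the decomposition $\mathscr{H}=\mathscr{H}_1+\mathscr{H}_2$ with $\mathscr{H}_1(\rho,\xi):=\langle \W(\rho),\xi\rangle$. Both summands vanish at $\xi=0$, so the hypotheses of that proposition are met, and it remains to compute the three ingredients of formula~\eqref{eq: decomposition of I general}: the Legendre dual $\mathscr{L}_1$, the critical $\xi^*$, and the evaluation $\mathscr{L}_2(\rho,\partial_\xi \mathscr{H}_2(\rho,\xi^*))$.

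First, since $\mathscr{H}_1$ is linear in $\xi$, its Legendre transform is the indicator functional $\mathscr{L}_1(\rho,j)=0$ if $j=\W(\rho)$ and $+\infty$ otherwise, and $\partial_\xi \mathscr{H}_1(\rho,\xi)\equiv \W(\rho)$. Therefore the equation $\partial_\xi \mathscr{H}(\rho,\xi^*)=0$ from~\eqref{eq: eq xi*} becomes
\begin{equation*}
  \W(\rho)+\partial_\xi \mathscr{H}_2(\rho,\xi^*)=0,
\end{equation*}
so that $\partial_\xi \mathscr{H}_2(\rho,\xi^*)=-\W(\rho)$. Substituting into~\eqref{eq: decomposition of I general} yields
\begin{equation*}
  \mathscr{L}(\rho,0)=\mathscr{L}_1(\rho,\W(\rho))+\mathscr{L}_2(\rho,-\W(\rho))=\mathscr{L}_2(\rho,-\W(\rho)),
\end{equation*}
which establishes the first equality in the claim.

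For the second equality, the plan is to invoke the Lagrangian form~\eqref{eq:lang-markov} applied to the reversible Hamiltonian $\mathscr{H}_2$ with force $F=d\S$; this is admissible because, by the characterisation of reversibility stated before Lemma~\ref{lem: reversible}, the Legendre pair $(\Psi_2,\Psi_2^*)$ built from $\mathscr{H}_2$ via~\eqref{eq:psi2} with $F(\rho)=d\S(\rho)$ satisfies all the structural properties needed to identify $\mathscr{L}_2$ with the rate-function integrand. This gives
\begin{equation*}
  \mathscr{L}_2(\rho,j)=\frac{1}{2}\Psi_2(\rho,j)-\langle j,d\S(\rho)\rangle+\frac{1}{2}\Psi_2^*(\rho,d\S(\rho)),
\end{equation*}
and specialising to $j=-\W(\rho)$ produces the displayed decomposition with the cross term $+\langle \W(\rho),d\S(\rho)\rangle$ arising from the middle summand. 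The orthogonality hypothesis $\langle d\S(\rho),\W(\rho)\rangle=0$ then immediately kills that cross term, yielding the simplified identity in the special case.

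The only nontrivial step is the justification of the Lagrangian formula for $\mathscr{L}_2$: one must check that $\mathscr{H}_2$, being reversible with respect to $\S$ and satisfying $\mathscr{H}_2(\rho,0)=0$, is of the form~\eqref{equ:ham-markov} with $F=d\S$ and with $\Psi_2^*$ as defined in~\eqref{eq:psi2}. Granting this identification (which is exactly the content of the Kraaij characterisation cited in the paragraph preceding Lemma~\ref{lem: reversible}), the proof is a direct unfolding of the two-term decomposition provided by Proposition~\ref{prop: general decomposition of H} together with the quadratic-style formula~\eqref{eq:lang-markov}.
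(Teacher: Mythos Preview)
Your proof is correct and follows essentially the same route as the paper: apply Proposition~\ref{prop: general decomposition of H} with $\mathscr{H}_1(\rho,\xi)=\langle\W(\rho),\xi\rangle$, observe that $\partial_\xi\mathscr{H}_1\equiv\W(\rho)$ forces $\partial_\xi\mathscr{H}_2(\rho,\xi^*)=-\W(\rho)$ so that $\mathscr{L}(\rho,0)=\mathscr{L}_2(\rho,-\W(\rho))$, and then expand $\mathscr{L}_2$ via~\eqref{eq:lang-markov} with $F=d\S$. The paper additionally records the one-line direct computation $\mathscr{L}(\rho,0)=\sup_\xi\{-\langle\W(\rho),\xi\rangle-\mathscr{H}_2(\rho,\xi)\}=\mathscr{L}_2(\rho,-\W(\rho))$, which bypasses Proposition~\ref{prop: general decomposition of H} altogether, but your argument is otherwise identical in structure and level of detail.
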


Below, we apply this lemma to obtain a decomposition of the so-called Donsker-Varadhan rate functional for non-reversible diffusion
processes, see Subsection~\ref{sec:appl-diff-proc}. 

\begin{proof}
  This lemma is an application of Proposition~\ref{prop: general decomposition of H}, where
  $\mathscr{H}_1(\rho,\xi)=\langle \W(\rho),\xi\rangle$, thus $\mathscr{L}_1(\rho,\cdot)$ is defined only on
  $\mathrm{Dom}(\mathscr{L}_1)=\{\W(\rho)\}$ and $\mathscr{L}_1(\rho,\W(\rho))=0$. In addition, since
  $\partial_{\xi}\mathscr{H}_1(\rho,\xi^*)=\W(\rho)$, we have
  $\partial_{\xi}\mathscr{H}_2(\rho,\xi^*)=-\W(\rho)$. Thus
  \begin{align*}
    \mathscr{L}(\rho,0) 
    &=\mathscr{L}_1(\rho,\W(\rho))+\mathscr{L}_2(\rho,-\W(\rho))=\mathscr{L}_2(\rho,-\W(\rho))
    \\&=\frac{1}{2}\Big[\Psi_2(\rho,-\W(\rho))+\Psi_2^*(\rho,d\S(\rho))+2\langle \W(\rho),d\S(\rho)\rangle\Big],
  \end{align*}
  where the last equality is~\eqref{eq:lang-markov} with $j=-\W(\rho)$ and $F(\rho)=d\S(\rho)$.
  We can also prove this directly as follows.
  \begin{align*}
    \mathscr{L}(\rho,0) 
    =\sup_{\xi}\{-\mathscr{H}(\rho;\xi)\}&=\sup_{\xi}\{-\langle \W(\rho),\xi\rangle-\mathscr{H}_2(\rho,\xi)\}\\&=\mathscr{L}_2(\rho,-\W(\rho)),
  \end{align*}
   where  
   the second  equation is by~\eqref{eq: H1 linear}.
\end{proof}

\subsection{Application to diffusion processes}
\label{sec:appl-diff-proc}

As application, we consider general non-reversible diffusion process of the form~\eqref{eq: nonreversible SDE}, where the diffusion matrix may depend on the position,   
\begin{equation*}
    \dd X_t=b(X_t)\,\dd t+\sqrt{2}\sigma(X_t)\,\dd W(t).
\end{equation*}
Let $\mu$ be the invariant measure of the process and $\S_\mu(\rho)$ be the relative entropy between $\rho$
and $\mu$, and $D=\sigma \sigma^T$. Then, one has~\cite{Mielke2014, DuongOttobre2021} for the generator
\begin{align*}
\mathcal{L}_s\phi&=\div(D\nabla\phi)+D\nabla\phi\cdot\nabla\log\mu,
\\ \cL_a\phi&=\cL\phi-\cL_s\phi=b\cdot\nabla\phi-D\nabla\phi\cdot\nabla\log\mu,
\\\cL'_s\rho&=\div(D\nabla\rho)-\div(\rho D\nabla\log\mu)=\div\left[\rho D\nabla\big(\log(\rho/\mu)\big)\right]=\div\left[\rho D\nabla \big(\dd\S_\mu(\rho)\big)\right],
\\ \cL_a' \rho&=\div(\rho D\nabla\log\mu-b\rho),
\end{align*}
and furthermore
\begin{align*}
  \mathscr{H}(\rho;\xi)&=\int e^{-\xi}\cL e^{\xi}\rho=(\xi, \cL'\rho)+( D\nabla\xi\cdot\nabla\xi,\rho).
\intertext{The symmetric Hamiltonian is given by}
 \mathscr{H}_s(\rho;\xi)&=\int e^{-\xi}\cL_s e^{\xi}\rho=(\xi, \cL_s'\rho)+(D\nabla\xi\cdot\nabla\xi,\rho).
            \intertext{The dissipation potential associated to the symmetric Hamiltonian (with $F(\rho)=-\dd\S_{\mu}(\rho)$) is given by} \Psi_s^*(\rho;\xi)&=2\left[\mathscr{H}_s\Big(\rho;\frac{1}{2}(\xi+\dd\S_\mu(\rho))\Big)-\mathscr{H}_s\Big(\rho;\frac{1}{2}d\S_\mu(\rho)\Big)\right]
                         \\&=\frac{1}{2}(D\nabla \xi\cdot\nabla \xi,\rho).
\end{align*}

\begin{lemma}
  Suppose that $\mathcal{L}_a$ satisfies a chain rule and that $\mathcal{L}_a'(\mu)=0$. Let $\S_\mu(\rho)$ be
  the relative entropy between $\rho$ and the invariant measure $\mu$. Then
  \begin{equation}
  \label{eq:exL}
    \mathscr{L}(\rho,0)=\mathscr{L}_s(\rho,-\mathcal{L}_a'(\rho))=\frac{1}{2}\Psi_s(\rho,-\mathcal{L}_a'(\rho))+\frac{1}{2}\Psi^*_s(\rho,d\S_\mu(\rho)).
  \end{equation}
\end{lemma}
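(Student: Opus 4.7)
The plan is to invoke Lemma~\ref{lem: H1 linear} with the natural decomposition coming from the reversible/irreversible split of the generator:
$$\mathscr{H}(\rho,\xi) \;=\; \underbrace{(\xi,\cL_a'\rho)}_{\mathscr{H}_1(\rho,\xi)} \;+\; \underbrace{(\xi,\cL_s'\rho) + (D\nabla\xi\cdot\nabla\xi,\rho)}_{\mathscr{H}_s(\rho,\xi)},$$
where I set $\W(\rho):=\cL_a'\rho$ and $\S:=\S_\mu$. The decomposition is just a rearrangement of the formula for $\mathscr{H}$ stated in the preamble of this subsection, and $\mathscr{H}_1$ is clearly linear in $\xi$, so the two nontrivial items are: (i) that $\mathscr{H}_s$ is symmetric around $d\S_\mu(\rho)$ in the sense of Lemma~\ref{lem: reversible}; and (ii) that the cross term $\langle\W(\rho),d\S_\mu(\rho)\rangle$ that appears in Lemma~\ref{lem: H1 linear} vanishes.

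For (i), I would integrate by parts in the first summand of $\mathscr{H}_s$ to write it as $-(\nabla\xi,\rho D\nabla d\S_\mu(\rho)) + (\rho D\nabla\xi,\nabla\xi)$ and complete the square in the quadratic form $\eta\mapsto(\rho D\nabla\eta,\nabla\eta)$ to obtain
$$\mathscr{H}_s(\rho,\xi) \;=\; \bigl(\rho D\nabla(\xi-\tfrac12 d\S_\mu(\rho)),\,\nabla(\xi-\tfrac12 d\S_\mu(\rho))\bigr) - \tfrac14\bigl(\rho D\nabla d\S_\mu(\rho),\nabla d\S_\mu(\rho)\bigr).$$
The first term is an even function of $\xi-\tfrac12 d\S_\mu(\rho)$ (using the symmetry $D=\sigma\sigma^T$), so it is invariant under $\xi\mapsto d\S_\mu(\rho)-\xi$; the second term is $\xi$-independent. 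Hence $\mathscr{H}_s(\rho,d\S_\mu(\rho)-\xi)=\mathscr{H}_s(\rho,\xi)$, which is the required symmetry. For (ii), the identity $\langle d\S_\mu(\rho),\cL_a'\rho\rangle=0$ follows from a short chain of dualisations using both hypotheses of the lemma: by duality $\langle\log(\rho/\mu),\cL_a'\rho\rangle=\int\rho\,\cL_a\log(\rho/\mu)\,\dd x$; the chain-rule assumption applied to $u\mapsto\log u$ gives $\cL_a\log(\rho/\mu)=(\mu/\rho)\,\cL_a(\rho/\mu)$; dualising once more yields $\int\mu\,\cL_a(\rho/\mu)\,\dd x=\int(\rho/\mu)\,\cL_a'\mu\,\dd x$; and $\cL_a'\mu=0$ closes the argument.

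Once (i) and (ii) are in place, Lemma~\ref{lem: H1 linear} delivers the first equality $\mathscr{L}(\rho,0)=\mathscr{L}_s(\rho,-\cL_a'\rho)$ directly, while the Lagrangian identity~\eqref{eq:lang-markov} applied with $j=-\cL_a'(\rho)$ and $F=d\S_\mu(\rho)$ gives
$$\mathscr{L}_s(\rho,-\cL_a'\rho) \;=\; \tfrac12\Psi_s(\rho,-\cL_a'\rho) + \langle\cL_a'\rho,d\S_\mu(\rho)\rangle + \tfrac12\Psi_s^*(\rho,d\S_\mu(\rho)),$$
in which the middle term vanishes by (ii), producing the stated decomposition. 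The only substantive obstacle is (ii): it is precisely the orthogonality of the antisymmetric part of the dynamics to the gradient flow of relative entropy, and it is exactly what forces the joint use of the chain rule for $\cL_a$ together with the stationarity condition $\cL_a'\mu=0$. The completion of the square in (i) is routine, but must be carried out pointwise so as to accommodate a position-dependent diffusion matrix $D$.
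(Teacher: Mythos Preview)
Your proposal is correct and follows essentially the same route as the paper: both verify the hypotheses of Lemma~\ref{lem: H1 linear} with $\W(\rho)=\cL_a'(\rho)$ and $\mathscr{H}_2=\mathscr{H}_s$, and both establish the orthogonality $\langle d\S_\mu(\rho),\cL_a'(\rho)\rangle=0$ by the identical duality/chain-rule chain. The only noteworthy difference is that you prove the reversibility of $\mathscr{H}_s$ around $d\S_\mu(\rho)$ explicitly by completing the square in the quadratic form $\eta\mapsto(\rho D\nabla\eta,\nabla\eta)$, whereas the paper invokes the general fact (with citations) that symmetry of $\cL_s$ in $L^2_\mu$ implies this reversibility; your direct computation is a nice self-contained substitute for that reference.
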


\begin{proof}
  Since $\mathcal{L}_a$ satisfies the chain rule, we can simplify $\mathscr{H}_a$ as
\begin{equation*}
  \mathscr{H}_a(\rho;\xi)=\big\langle e^{-\xi} \mathcal{L}_a e^{\xi}\big\rangle_\rho
  =\big\langle\mathcal{L}_a (\xi)\big\rangle_\rho=\langle \mathcal{L}_a'(\rho),\xi\rangle.
\end{equation*}
Since $\mathcal{L}_s$ is symmetric in $L^2_\mu$, $\mathscr{H}_s$ is symmetric around
$d\S_{\mu}(\rho)$~\cite{Mielke2014,DuongOttobre2021}. Therefore, the above decomposition
\begin{equation*}
  \mathscr{H}(\rho,\xi)=\mathscr{H}_a(\rho;\xi)+\mathscr{H}_s(\rho,\xi)
  =\langle \mathcal{L}_a'(\rho),\xi\rangle+\mathscr{H}_s(\rho,\xi).
\end{equation*}
Furthermore, we have
\begin{align*}
  \langle d\S_\mu(\rho), \mathcal{L}_a'(\rho)\rangle&= \langle \log\frac{\rho}{\mu},\mathcal{L}_a'(\rho)\rangle=\langle \rho,\mathcal{L}_a\big(\log\frac{\rho}{\mu}\big)\rangle=\langle \rho, \frac{\mu}{\rho}\mathcal{L}_a(\frac{\rho}{\mu})\rangle
  \\&=\langle 1, \mu\mathcal{L}_a(\frac{\rho}{\mu})\rangle=\langle \frac{\rho}{\mu},\mathcal{L}_a'(\mu)\rangle=0.
\end{align*}
Hence, the above decomposition satisfies the assumptions in Lemma~\ref{lem: H1 linear} with
$\W(\rho)=\mathcal{L}_a'(\rho)$, $\mathscr{H}_2=\mathscr{H}_s$ and $\S=\S_\mu$; therefore, the statement of this
lemma follows from Lemma~\ref{lem: H1 linear}.
\end{proof}

This lemma applies to the non-reversible diffusion process described above; we obtain the splitting 
\begin{multline*}
  \mathscr{L}(\rho,0)=\frac{1}{2}\Psi_s(\rho,-\mathcal{L}_a'(\rho))+\frac{1}{2}\Psi^*_s(\rho,d\S_\mu(\rho))= \\
  \frac{1}{2}\Psi_s(\rho,-\div(\rho D\nabla\log\mu-b\rho))+\frac{1}{2}\Psi^*_s(\rho,d\S_\mu(\rho)),
\end{multline*}
where $\S_\mu$ is the relative entropy.  
This example is chosen for illustrative purposes only; as the dissipation potential $\Psi^\star$ and hence its Legendre dual are quadratic, splittings can be analysed using polarisation identities as mentioned at the beginning of Section~\ref{sec:splitt-hamilt}. Yet, although the splittings developed in this paper are developed with nonquadratic dissipation potentials in mind, the results apply in the quadratic case as well. The interpretation of the result above is as follows: \eqref{eq:exL} is a statement about the stationary (equilibrium) state, $j=0$. The result says that the functional depends there only on the symmetric part $\Psi_s^\star$; there is no contribution from the asymmetric part. So we see that the addition of the asymmetric part does not change the steady state. With additional arguments, it can be shown that away from $j=0$, the (Donsker-Varadhan) rate functional associated with~\eqref{eq:exL} increases when the non-reversible component is present~\cite{ReyBellet2015, Rey-Bellet2016}. As discussed at the beginning of Section~\ref{sec:splitt-hamilt}, this can be interpreted as the asymmetric process converging faster (or at least equally as fast) as the symmetric process.   

 
\section{Conclusion}

In Section~\ref{sec:isosurface}, we have shown that the non-uniqueness of the iso-dissipation force can be
explained through different notions of duality. In Section~\ref{sec:HJ-for-MC}, we have introduced different
splittings of Hamiltonians associated to Markov processes through large deviation principles, and given an
application to a diffusion process on the Fokker-Planck level.

It is natural to compare these decompositions to decompositions in terms of the generator; this is area of
future research. Similar, the application of these splittings for different non-reversible processes remains to
be investigated.

\section*{Acknowledgment}
The research of MHD was supported by EPSRC grants  EP/V038516/1  and  EP/W008041/1.

\bibliographystyle{alpha}
\bibliography{refs}

\end{document}